\documentclass[11pt]{article}

\usepackage{amsthm}
\usepackage{amssymb}
\usepackage{amsmath,enumerate}
\usepackage[ruled, lined, linesnumbered]{algorithm2e}
\usepackage{comment}
\usepackage{thm-restate}
\usepackage[T1]{fontenc}
\usepackage{lmodern}
\usepackage{url} 
\usepackage{hyperref}
\usepackage{graphicx}
\usepackage{subfigure}
\usepackage[noabbrev,capitalise]{cleveref}
\usepackage[affil-it]{authblk}
\usepackage{color}

\usepackage[margin=1in]{geometry}

\theoremstyle{plain}
\newtheorem{thm}{Theorem}[section]
\newtheorem{lem}[thm]{Lemma}

\newtheorem{cor}[thm]{Corollary}

\newtheorem{conj}[thm]{Conjecture}

{\noindent \emph{Proof.} {}{#1}{}}{\hfill
	$\Diamond$\vspace{1em}}

\theoremstyle{plain} 
\newcommand{\thistheoremname}{}
\newtheorem{genericthm}[section]{\thistheoremname}

\theoremstyle{definition}

\def\es{\emptyset}
\def\less{\setminus}
 \def\dfn#1{{\sl #1}}

\newcommand{\etalchar}[1]{$^{#1}$}
\DeclareTextAccent{\caronaccent}{T1}{7}

\newcommand{\Nesetril}
  {Ne{\caronaccent{s}}et{\caronaccent{r}}il}

\title{On the size of  $(K_{t_1}, \ldots, K_{t_k})$-co-critical graphs}
\author{Zi-Xia Song\thanks{Supported by  NSF award DMS-2153945. E-mail address: {\tt Zixia.Song@ucf.edu}.}}
  \affil{ 
  { \small {Department  of Mathematics, University of Central Florida, Orlando, FL 32816, USA}}  
     }

\date{}
\begin{document}
\maketitle
\begin{abstract}
 Given integers $k\ge2$ and $t_1, \ldots, t_k\ge2$,   we write \emph{$G \rightarrow (K_{t_1}, \ldots, K_{t_k})$} if  every $k$-coloring of the edges of a graph $G$ contains  a monochromatic copy of $K_{t_i}$ in color $i$ for some $i\in\{1, \ldots, k\}$. A non-complete graph $G$ is \emph{$(K_{t_1}, \ldots, K_{t_k})$-co-critical} if $G  \nrightarrow (K_{t_1}, \ldots, K_{t_k})$,   but  $G+e\rightarrow (K_{t_1}, \ldots, K_{t_k})$ for every edge $e\notin E(G)$.  Let $r=R(K_{t_1}, \ldots, K_{t_k})$ denote the Ramsey number.   In 1987, Hanson and Toft conjectured   that   every  $(K_{t_1}, \ldots, K_{t_k})$-co-critical graph $G$ on $n\ge r$ vertices satisfies   \[|E(G)|\ge (r-2)n-   \binom{r- 1}{2}.\] 
 This bound is best possible for every  $n\ge r$. More recently, the present author conjectured that  every such graph has minimum degree at least $r-2$.  Using  the $q$-neighbor bootstrap percolation closure method,  here we prove that the Hanson-Toft Conjecture holds asymptotically, provided that the minimum-degree conjecture is true;  more precisely, If every
$(K_{t_1},\ldots,K_{t_k})$-co-critical graph has minimum degree at least
$r-2$, then there is a constant $C=C(r,k)$
such that every $(K_{t_1},\ldots,K_{t_k})$-co-critical graph $G$ on
$n\ge r$ vertices satisfies  $|E(G)|\ge (r-2)n-C$.  
 \end{abstract}

\baselineskip 18pt

\section{Introduction}
   All graphs considered in this paper are finite, and without loops or multiple edges. For a graph $G$, we will use $V(G)$ to denote the vertex set, $E(G)$ the edge set,   $e(G)$ the number of edges,   $\delta(G)$ the minimum degree,   and $\overline{G}$ the complement of $G$.
 For  any  edge  $e$ in $ \overline{G} $, we use $G+e$ to denote the graph obtained from $G$ by adding the new edge $e$. 
The {\dfn{join}} $G+H$   of two 
vertex disjoint graphs
$G$ and $H$ is the graph having vertex set $V(G)\cup V(H)$  and edge set $E(G)
\cup E(H)\cup \{xy\mid x\in V(G),  y\in V(H)\}$.
  For any positive integer $k$, we write  $[k]$ for the set $\{1,2, \ldots, k\}$. We use the convention   ``$A:=$'' to mean that $A$ is defined to be the right-hand side of the relation.  \medskip

  A \dfn{$k$-edge coloring} of a graph $G$ is a function $\tau:E(G)\to [k]$.  We think of the set  $[k]$ as a set of colors, and we may identify a member of $[k]$ as a color, say, color $k$ is blue. Given an integer $k \ge 2$ and graphs $G$, ${H}_1, \ldots, {H}_k$,   we write \dfn{$G \rightarrow ({H}_1, \dots, {H}_k)$} if every $k$-coloring of $E(G)$ contains a monochromatic  ${H}_i$ in color $i$ for some $i\in [k]$.
  The classical \dfn{Ramsey number}  $R({H}_1, \dots, {H}_k)$ is the minimum positive integer $n$ such that $K_n \rightarrow ({H}_1, \dots, {H}_k)$. \medskip

  Following~\cite{Galluccio1992}, a non-complete graph $G$ is $(H_1, \dots, H_k)$-\dfn{co-critical} if $G \not\rightarrow ({H}_1,\dots, {H}_k)$, but $G + e \rightarrow ({H}_1,\dots, {H}_k)$ for every edge $e$ in $\overline{G}$. The notion of co-critical graphs was initiated by   \Nesetril~\cite{Nesetril1986}  in 1986   when he asked the following question regarding   $(K_3, K_3)$-co-critical graphs: 
\begin{quote}
 Are there   infinitely  many \dfn{minimal} co-critical graphs, i.e.,  co-critical graphs which lose this property when any vertex is deleted? Is $K_6^-$ the only one? 
\end{quote}
This was answered in the positive by Galluccio, Simonovits and Simonyi~\cite{Galluccio1992}.  They constructed infinitely many minimal $(K_3, K_3)$-co-critical graphs without   $K_5$ as a subgraph.    Szab\'o~\cite{Szabo1996} then constructed   infinitely many  nearly regular $(K_3, K_3)$-co-critical graphs with low maximum degree.  It remains unknown whether there are infinitely many \dfn{strongly} minimal co-critical graphs, where an $(H_1, \ldots,  H_r)$-co-critical graph is \dfn{strongly minimal co-critical} if it contains no proper subgraph which is also $(H_1, \ldots,  H_r)$-co-critical.  Galluccio, Simonovits and Simonyi~\cite{Galluccio1992}  also made some observation on the  maximum number of possible edges of $(H_1, \ldots,  H_r)$-co-critical graphs. \medskip

    Hanson and Toft~\cite{Hanson1987}  independently   studied  the minimum and maximum number  of edges over all $(H_1, \ldots,  H_k)$-co-critical graphs on $n$ vertices when $H_1, \ldots,  H_k$ are complete graphs, under the name of \dfn{strongly $(|H_1|, \ldots, |H_k|)$-saturated} graphs. Recently, this topic has  been studied under the name of \dfn{$\mathcal{R}_{\min}(H_1, \ldots, H_k)$-saturated} graphs \cite{Chen2011, Ferrara2014,  RolekSong18b}.   Let $k\ge2$ and  $t_1,\ldots, t_k\ge2$ be integers. Let $ r:=R(K_{t_1},\dots,K_{t_k})$.  Hanson and Toft~\cite{Hanson1987}  observed   that for all $n \ge r$,  the graph $K_{r-2}+ \overline K_{n-r+2}$ is $(K_{t_1},\dots, K_{t_k})$-co-critical with $(r - 2)n  - \binom{r - 1}{2}$ edges; they made the following  conjecture.

\begin{conj}[Hanson and Toft~\cite{Hanson1987}]\label{HTC}  
Let  $r = R(K_{t_1}, \dots, K_{t_k})$. If $G$ is a $(K_{t_1},\dots, K_{t_k})$-co-critical graph on $n\ge r$ vertices, then 
\begin{align*}
e(G)\ge (r - 2)n - \binom{r - 1}{2}.  
\end{align*}
   The bound is best possible for all $n\ge r$.
\end{conj}
\medskip

 It was shown in~\cite{Chen2011} that every $(K_3,K_3)$-co-critical graph on $n\ge 56$ vertices has at least $4n-10$ edges. This settles the first non-trivial case of Conjecture~\ref{HTC} for sufficiently large $n$.    Conjecture~\ref{HTC} remains wide open. 
 We refer the reader to a recent paper by Zhang and   the present  author~\cite{SZ21} for further background on $(H_1, \dots, H_k)$-co-critical graphs, and to~\cite{P3cocritical,CMSZ22,C4cocritical,DSY22, Ferrara2014, RolekSong18b,SZ21}   for recent work on minimizing the number of edges in  $(H_1, \ldots, H_k)$-co-critical graphs. We also refer the reader to    the dynamic survey~\cite{CFFS21} on the extensive studies on $K_t$-saturated graphs. \medskip
 
 In the same paper  Galluccio, Simonovits and Simonyi  also studied   the minimum  degree of $(K_3, K_3)$-co-critical graphs.

\begin{thm}[Galluccio,  Simonovits and Simonyi~\cite{Galluccio1992}]\label{t:K3K3}
Every $(K_3, K_3)$-co-critical graph on $n\ge6$ vertices  has minimum degree at least four. The bound is sharp for all $n\ge 6$.   

\end{thm}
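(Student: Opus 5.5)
We argue by contradiction. Let $G$ be $(K_3,K_3)$-co-critical on $n\ge 6$ vertices and suppose some vertex $v$ has $d(v)\le 3$. Since $G$ is not complete and $G\nrightarrow(K_3,K_3)$, fix a red/blue coloring $\tau$ of $E(G)$ with no monochromatic triangle. The idea is to show that $v$ has a non-neighbor $u$ such that $\tau$ extends to $G+uv$ without creating a monochromatic triangle, contradicting co-criticality.

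\textbf{Step 1: a non-neighbor exists.} Since $n\ge 6$ and $d(v)\le 3$, we have $|V(G)\setminus N[v]|\ge 2$.

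\textbf{Step 2: forcing conditions.} For a non-neighbor $u$, adding $uv$ creates only triangles $uvw$ with $w\in N(v)\cap N(u)$. The edge $uv$ can be colored red unless some $w\in N(u)\cap N(v)$ has $vw$ and $uw$ both red. It can be colored blue unless some $w'\in N(u)\cap N(v)$ has $vw'$ and $uw'$ both blue. Co-criticality forces both obstructions for every non-neighbor $u$, so $u$ has a common neighbor $w$ joined to both $u$ and $v$ in red, and a common neighbor $w'$ joined to both in blue. In particular $N(v)$ contains both a red neighbor and a blue neighbor of $v$.

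\textbf{Step 3: recoloring edges at $v$.} Since $d(v)\le 3$, the colors on the edges at $v$ are unbalanced. We may assume $v$ has exactly one red neighbor $a$ and its other neighbors $b$ (and possibly $c$) are blue, or one of the cases with $d(v)\le 2$. Then every non-neighbor $u$ satisfies $ua$ red and $u$ adjacent in blue to $b$ or $c$.

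We now exploit freedom in $\tau$. Recoloring an edge $vx$ only affects triangles $vxy$ with $y\in N(v)$. If $a$ is not adjacent to $b$ or $c$, recolor $va$ blue. The coloring stays triangle-free in each color except possibly on the set $\{v,b,c\}$, which has already been checked, and $v$ now has no red neighbor. This contradicts Step 2. So $a$ is adjacent to some of $b,c$, and the colors on $G[N[v]]$ are tightly constrained. Similarly, if $bc\notin E(G)$ or $bc$ is red, we can try recoloring $vb$ red.

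\textbf{Step 4: case analysis.} Enumerate the colorings of $G[N[v]]$, which has at most $4$ vertices, that avoid monochromatic triangles and admit no recoloring at $v$ of the kind in Step 3. In each surviving configuration, derive a contradiction using two non-neighbors $u_1,u_2$. Both must send red edges to $a$ and blue edges into $\{b,c\}$. If $u_1u_2$ is an edge, it must avoid triangles with $a$ and with the blue neighbors, which pins down its color. Alternatively, if $u_1u_2$ is a non-edge, apply co-criticality to $u_1u_2$ itself, or recolor edges at $a$.

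\textbf{Main obstacle.} I expect Step 4 to be the main difficulty: making the recoloring arguments airtight when $\deg v=3$ and $G[N(v)]$ is dense. The fallback is to choose $\tau$ extremal, for example minimizing the number of red edges at $v$, so that Step 3's recolorings are automatically excluded.

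\textbf{Sharpness.} $K_4+\overline{K}_{n-4}$ has minimum degree $4$. It is co-critical because $R(3,3)=6$ and the complement of this graph is a clique of independent vertices, matching the Hanson--Toft construction with $r=6$.
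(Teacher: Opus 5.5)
The paper does not prove this theorem; it only quotes it from Galluccio--Simonovits--Simonyi. So I am judging your argument on its own merits.

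\textbf{The gap in Step~4.} Steps 1 and 2 are correct. The proof stops where the real work begins: Step~4 is a plan, and the tools you list there do not close the case that actually survives. Write $N(v)=\{x,y,z\}$ and keep $\tau$ fixed on $G-v$. Every recoloring of $vx,vy,vz$ that creates no monochromatic triangle through $v$ must still satisfy Step~2.
\begin{itemize}
\item If $G[N(v)]$ has no red (resp.\ blue) edge, color all three edges at $v$ red (resp.\ blue), and Step~2 fails. So $G[N(v)]$ is either a path with $xy$ red and $yz$ blue, or a triangle with $xy,xz$ red and $yz$ blue.
\item The path case is ruled out only if you use \emph{two} admissible colorings at once. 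The choice $(vx,vy,vz)=(\text{red},\text{blue},\text{red})$ forces $uy$ blue, and $(\text{blue},\text{red},\text{blue})$ forces $uy$ red, for every non-neighbor $u$. Your Step~3 works with one normalized coloring at a time and never does this.
\item The triangle case is not contradictory locally. The admissible colorings at $v$ are exactly (blue,red,red), (blue,red,blue) and (blue,blue,red). All three pass Step~2 provided every non-neighbor $u$ has $ux$ blue and $uy,uz$ red. No recoloring at $v$, and no extremal choice of $\tau$, rules this configuration out.
\end{itemize}

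\textbf{What is needed to finish.} Two steps are missing from the proposal.

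First, the multi-coloring argument above forces \emph{every} non-neighbor of $v$ to have the same colored type on $N(v)$. Only then does an edge $u_1u_2$ give a contradiction: it cannot be blue because of $x$, nor red because of $y$. With your single normalized coloring you only know that each $u_i$ sends a blue edge to one of $v$'s blue neighbors, possibly different ones. So ``pins down its color'' does not follow. Once the common type is established, the non-neighbors are pairwise nonadjacent and $G\cong K_3+\overline{K}_{n-3}$.

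Second, you must show this graph is not co-critical. ``Apply co-criticality to $u_1u_2$'' cannot be done by extending $\tau$, since either color on $u_1u_2$ closes a monochromatic triangle with $x$ or with $y$. A global recoloring is needed. The graph $G+u_1u_2$ is $5$-colorable, hence maps homomorphically to $K_5$. Pulling back a 2-coloring of $K_5$ with no monochromatic triangle gives a good coloring of $G+u_1u_2$. Equivalently, use Lemma~\ref{l:chi}: $\chi(G)\ge 5$ for co-critical $G$, whereas $\chi(K_3+\overline{K}_{n-3})=4$.

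\textbf{Smaller points.}
\begin{itemize}
\item In Step~3, recoloring $vb$ red is blocked only by $ab$ being red. The edge $bc$ is irrelevant, since $vc$ stays blue.
\item In the sharpness part, the remark about the complement is not meaningful. The correct reason is that adding any missing edge to $K_4+\overline{K}_{n-4}$ creates a $K_6$. Meanwhile $K_4+\overline{K}_{n-4}$ itself is $5$-colorable, so it has a good coloring pulled back from $K_5$.
\end{itemize}
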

 It is worth noting that  the graph $K_{r-2}+ \overline K_{n-r+2}$ has minimum degree $r-2$. With the support of \cref{t:K3K3}, the present author~\cite{CSS24}    made the following conjecture.

\begin{conj}[Song~\cite{CSS24} ]\label{c:mindeg}
Let $r=R(K_{t_1},\dots,K_{t_k})$ and let $G$ be a $(K_{t_1}, \ldots,  K_{t_k})$-co-critical graph on $n\ge r$ vertices.  Then $\delta(G)\ge r-2$. 
\end{conj}
 
Very recently, Casas-Rocha, Snyder and the present author~\cite{CSS24} confirmed \cref{c:mindeg} for $(K_3, K_4)$-co-critical graphs.  

\begin{thm}[Casas-Rocha, Snyder and Song~\cite{CSS24}]\label{t:K3K4}
Every $(K_3, K_4)$-co-critical graph on $n\ge 9$ vertices has  minimum degree at least seven. The bound is sharp for all $n\ge 9$.
\end{thm}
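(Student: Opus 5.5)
Writing $r=R(K_3,K_4)=9$, the claim is $\delta(G)\ge r-2=7$. I would argue by contradiction, and the plan is to reduce everything to a finite problem on at most seven vertices. The sharpness part is immediate from the Hanson--Toft example $K_7+\overline K_{n-7}$, which is $(K_3,K_4)$-co-critical and has minimum degree $7$, so only the lower bound needs work.

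Let $G$ be $(K_3,K_4)$-co-critical on $n\ge 9$ vertices, and suppose some vertex $v$ has $d(v)\le 6$. Then $v$ has a non-neighbour $u$. Since $G\nrightarrow (K_3,K_4)$, there is a \emph{good} coloring $\tau$ of $E(G)$: red (color $1$) contains no $K_3$ and blue (color $2$) contains no $K_4$. The aim is to produce a good coloring of $G+uv$, contradicting co-criticality.

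\textbf{Step 1 (reduction to a partition problem).} Keep $\tau$ on all of $E(G-v)$ and recolor only the edges from $v$ to $S:=N(v)\cup\{u\}$, where $|S|\le 7$. Any monochromatic clique in $G+uv$ that uses $v$ has its other vertices in $S$. So the new coloring is good exactly when $S$ splits as $A\cup B$ (edges $vA$ red, edges $vB$ blue) such that:
\begin{itemize}
\item no edge of $G$ inside $A$ is red under $\tau$;
\item no triangle of $G$ inside $B$ is entirely blue under $\tau$.
\end{itemize}
Thus it suffices to show that such a split exists for a suitable good $\tau$ and a suitable choice of $u$.

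\textbf{Step 2 (direct case analysis).} The restriction of $\tau$ to $G[S]$ is a good coloring of a graph on at most $7$ vertices; its red graph $R$ is triangle-free and its blue graph $B_\tau$ is $K_4$-free. I would take $A$ to be a maximal $R$-independent set and try to show that the blue triangles avoiding $A$ can be destroyed. This is done by moving a vertex of such a triangle into $A$, which is allowed when it has no red neighbour in $A$.

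A blue triangle $T\subseteq S\setminus A$ whose three vertices all have red neighbours in $A$ forces a rigid configuration. In it, every vertex of $T$ has red edges into $A$, yet $R$ remains triangle-free on at most $7$ vertices. I expect this to leave only a handful of explicit extremal patterns, roughly a red $C_5$ or $C_4$ structure with blue triangles attached.

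\textbf{Step 3 (the obstacle: modifying the coloring).} The main obstacle is that for a fixed $\tau$ the partition may genuinely fail. For example, $G[S]$ may itself carry a coloring in which no such split exists; this is plausible since $R(3,4)=9>8$ while $|S|+1=8$. For these residual configurations I would use the remaining freedom in two ways.
\begin{itemize}
\item \emph{Choice of $u$.} Any non-neighbour of $v$ may serve as $u$, and there are at least $n-7\ge 2$ of them. Co-criticality must then fail for every one of them simultaneously.
\item \emph{Local recoloring.} Edges inside $N(v)$, or edges at $u$ that lie in no monochromatic near-clique of $G-v$, can have their colors swapped while $\tau$ stays good.
\end{itemize}
Concretely, I would show that in each exceptional pattern some edge of $G[N(v)]$ can be flipped from red to blue (or vice versa) without creating a red $K_3$ or blue $K_4$ in $G$. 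I would use the fact that $d(v)\le 6$ bounds how many cliques through $N(v)$ must be checked, and after the flip the partition of Step~1 exists.

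The bulk of the work, and the place where the argument could need a computer-assisted or long hand case check, is verifying that every exceptional configuration admits such a flip. That check would be organized by $d(v)\in\{\le 4,5,6\}$, with the case $d(v)=6$ (so $|S|=7$) the hardest.
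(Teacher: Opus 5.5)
Your sharpness example is correct, and so is the reduction in Step~1: with $\tau$ fixed on $G-v$, the edges at $v$ can be colored well in $G+uv$ exactly when $S=N(v)\cup\{u\}$ admits your split. After that, though, the proposal is a plan, not a proof, and it has a genuine gap. The one concrete mechanism you give for the hard case is flipping an edge $xy$ of $G[N(v)]$, and it rests on a false premise. The bound $d(v)\le 6$ says nothing about the monochromatic cliques through $xy$. The neighbours of $v$ can have arbitrarily large degree. Whether turning $xy$ red creates a red triangle, or turning it blue creates a blue $K_4$, depends on common neighbours of $x$ and $y$ anywhere in $G-v$, about which you know nothing.

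Worse, co-criticality says that \emph{every} good coloring of $G-v$, restricted to \emph{every} set $N(v)\cup\{u\}$ with $u$ a non-neighbour of $v$, admits no split. So your ``exceptional configurations'' are exactly what must be excluded, and looking at seven vertices cannot exclude them. For example, let $G[S]$ be $K_7$ on $\mathbb{Z}_7$, colored with red $C_7$ and blue $\overline{C_7}$, and take $N(v)=\{1,\dots,6\}$ and $u=0$.
\begin{itemize}
\item This coloring is good.
\item Its restriction to $\{1,\dots,6\}$ (a red path) extends over the cone at $v$ via $A=\{1,4,6\}$.
\item Yet no split of $S$ exists: every red-independent $A$ leaves an independent triple of $C_7$, i.e.\ a blue triangle, in $S\setminus A$.
\end{itemize}
Whether $G-v$ forces such patterns for all $u$ simultaneously is a global question. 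Several choices of $u$, or a maximal independent set as in Step~2, do not address it unless you show how the colorings on different sets $N(v)\cup\{u\}$ are linked through the rest of $G$.

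What is missing is a global ingredient. The paper does not reprove this theorem. It does note that the proof of Casas-Rocha, Snyder and Song relies essentially on the Galluccio--Simonovits--Simonyi lemma: a co-critical $G$ has $\chi(G)\ge r-1$, with equality only when $G$ is complete $(r-1)$-partite. The reason is that if $\chi(G)\le r-1$, then $G$ lies inside the complete multipartite graph on its color classes, which is a blow-up of $K_{r-1}$ and hence not arrowing. A complete $(r-1)$-partite graph has minimum degree at least $r-2$, so a vertex of degree at most $6$ forces $\chi(G)\ge 9$. That is a statement about the whole graph, and no recoloring confined to the neighbourhood of $v$ can produce it. Without some argument of this kind controlling how $G-v$ constrains the colorings of $N(v)\cup\{u\}$, Steps~2 and~3 do not amount to a proof.
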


The purpose  of this paper is  to prove that Hanson-Toft Conjecture holds asymptotically under the assumption that Conjecture~\ref{c:mindeg} is true.

\begin{restatable}{thm}{threeclaw}\label{t:main}
Let
$r=R(K_{t_1},\ldots,K_{t_k})$, where  $k\ge2$ and $t_1,\ldots,t_k\ge2$ are  integers.  If every
$(K_{t_1},\ldots,K_{t_k})$-co-critical graph has minimum degree at least
$r-2$, then there is a constant $C=C(r,k)$
such that every $(K_{t_1},\ldots,K_{t_k})$-co-critical graph $G$ on
$n\ge r$ vertices satisfies
\[
  e(G)\ge (r-2)n-C.
\]
\end{restatable}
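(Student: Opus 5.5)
The plan is to reduce the edge bound to a statement about $(r-2)$-neighbor bootstrap percolation. Set $q:=r-2$. For a set $A\subseteq V(G)$, the \emph{$q$-closure} of $A$ is obtained by repeatedly adding any vertex with at least $q$ neighbours in the current set. If the closure of $A$ is all of $V(G)$, order the vertices of $V(G)\setminus A$ by the time they are added. Each vertex then sends at least $q$ edges backwards, and these edges are distinct, so $e(G)\ge q(n-|A|)$. Hence it suffices to find a constant $f=f(r,k)$ such that every $(K_{t_1},\ldots,K_{t_k})$-co-critical graph $G$ either already has $e(G)\ge (r-2)n$ or contains a set $A$ with $|A|\le f$ whose $q$-closure is $V(G)$. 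We can then take $C=(r-2)f$; for small $n$ the constant can be enlarged to absorb everything.

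\textbf{Choice of $A$.} Fix a large constant $D=D(r,k)$ and let $A$ be the set of vertices of degree at least $D$. If $|A|>2(r-2)n/D$, then $e(G)\ge |A|D/2>(r-2)n$ and we are done. So we may assume $A$ is small relative to $n$, but this is not yet bounded. I would therefore refine the choice: take $A$ to be a smallest set whose closure is maximal, or run the argument below on the closure $S$ of the high-degree set and handle the remainder separately. Let $S$ be the $q$-closure of $A$ and $W:=V(G)\setminus S$. By closedness, every $w\in W$ has fewer than $r-2$ neighbours in $S$. Moreover every vertex of $W$ has degree less than $D$, so $G[W]$ is a bounded-degree graph.

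\textbf{Key step: showing $W=\emptyset$ via the hypothesis.} We have $G\nrightarrow(K_{t_1},\ldots,K_{t_k})$. Fix a good colouring $\tau$ of $G$, that is, one with no $K_{t_i}$ in colour $i$ for every $i$. For a non-edge $uv$ with $u\in W$, co-criticality forces the following: for every colour $i$, $u$ and $v$ have $t_i-2$ common neighbours spanning a colour-$i$ clique in $\tau$. This structure must be realised inside $N(u)$, which has fewer than $r-2$ vertices in $S$ and is otherwise contained in the bounded-degree part $W$. I would build an auxiliary graph $H$ as follows. Take $G[N[u]\cup N[v]]$ together with the vertices of $S$ that it touches, then add edges greedily, never touching $u$, until $H$ is co-critical. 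Adding edges is always possible while $H\nrightarrow(K_{t_1},\ldots,K_{t_k})$, since a non-arrowing graph extends to a co-critical one or to a complete graph on fewer than $r$ vertices. The aim is to arrange that $u$ keeps degree less than $r-2$ in $H$. This contradicts the assumed minimum-degree bound (Conjecture~\ref{c:mindeg}), and so forces $W=\emptyset$.

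\textbf{Main obstacle.} The delicate point is the saturation step. Adding edges elsewhere might make some new edge at $u$ non-arrowing, or the greedy completion might need to add edges at $u$ itself, and then the degree of $u$ is no longer controlled. To handle this, I would first try to choose $u\in W$ of minimum degree, and to saturate only inside $V(H)\setminus\{u\}$. One then checks, using the colouring $\tau$ restricted to $H$, that every non-edge at $u$ still arrows in $H$. This holds because the witnessing monochromatic cliques through $u$ lie inside $N_G(u)$, which is unchanged in $H$. Finally, the bounded-size requirement on $A$ would come from iterating: the high-degree vertices that are actually needed to infect $S$ can be pruned to $O_{r,k}(1)$ many. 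If that pruning fails, the excess degree they carry already gives the additional $\Omega(n)$ edges needed.
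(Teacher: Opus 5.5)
\textbf{There is a genuine gap at your key step.} The contradiction with the minimum-degree hypothesis can never occur. Your auxiliary graph $H$ contains $G[N[u]]$, and you explicitly keep $N_G(u)$ unchanged. Hence $d_H(u)\ge d_G(u)\ge r-2$, where the last inequality is the hypothesis applied to $G$ itself. That $u\in W$ has fewer than $r-2$ neighbours in $S$ is irrelevant, because its neighbours in $W$ also lie in $H$. So nothing forces $W=\emptyset$.

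Your check that non-edges $uv'$ still arrow in $H$ is also unjustified. Arrowing quantifies over \emph{all} colourings of $H+uv'$. A good colouring of $H+uv'$, where $H$ carries greedily added edges and lacks most of $G$, need not extend to a colouring of $G+uv'$. The cliques you found exist only for the fixed colouring $\tau$.

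Beyond this, your target (a bounded set whose $q$-closure is all of $V(G)$) is stronger than needed, and you give no reason it should hold. The bound on $|A|$ rests on an unsupported ``pruning'' claim. If $|A|$ is linear in $n$, then $e(G)\ge q(n-|A|)$ loses a linear term.

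\textbf{What the paper does instead.} It uses co-criticality only through one fixed good colouring $\tau$, via a replacement lemma. Suppose $x,y\notin R$ have the same coloured $R$-type (same adjacency and same colour to every vertex of $R$), $zx\in E(G)$ and $zy\notin E(G)$. Then $z$ and $y$ have a common neighbour outside $R$. To see this, give $zy$ the colour $\tau(zx)$. The forced monochromatic clique through $zy$ must contain a vertex outside $R\cup\{y,z\}$; otherwise replacing $y$ by $x$ gives a monochromatic clique in $G$ under $\tau$.

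The paper also never needs full percolation. It only needs a bounded $R$ such that every $v\notin\overline R$ has weight $d_{\overline R}(v)+\frac12 d_{Y(R)}(v)\ge q$, which already gives $e(G)\ge q(n-|R|)$. To find such an $R$, it proceeds in rounds:
\begin{itemize}
\item Group the bad vertices by coloured $R$-type, giving at most $(k+1)^{|R|}$ classes.
\item For each class, add to $R$ a neighbour $x_j\in Y(R)$ of one representative, together with the at most $q-1$ neighbours of $x_j$ in $\overline R$.
\item Apply the lemma to show the control function $\phi_R(v)\le\omega_R(v)<q$ rises by at least $1/(2q)$ for every bad vertex that survives.
\end{itemize}
This stops after at most $2q^2$ rounds, with $|R|$ bounded in terms of $q$ and $k$. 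The minimum-degree hypothesis enters only as $\delta(G)\ge q$ for $G$ itself, which guarantees that every bad vertex has a neighbour in $Y(R)$. No auxiliary co-critical graph is ever built.
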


Using the  $q$-neighbor bootstrap percolation closure method, we  prove \cref{t:main} in Section~\ref{s:main}. The $q$-neighbor bootstrap percolation process was introduced by Chalupa, Leath and Reich~\cite{bootstrap}. Day~\cite{Day2017} applied the $q$-neighbor bootstrap percolation closure method to $K_t$-saturated graphs with prescribed minimum degree to determine the minimum number of edges. In particular, for $t=3$, he confirmed a conjecture of Bollob\'as~\cite{Bollobas}. Building on Day's work, Zhang and the present author~\cite{SZ21} extended the method to graphs with prescribed minimum degree that are not necessarily $K_t$-saturated and used it to investigate the minimum number of edges in $(K_t,\mathcal{T}_k)$-co-critical graphs. Building on  the  work in~\cite{SZ21},  we prove \cref{t:main}  by applying  the $q$-neighbor bootstrap percolation closure method directly to $(K_{t_1},\ldots,K_{t_k})$-co-critical graphs with prescribed minimum degree. \medskip

 Combining  \cref{t:main}  with \cref{t:K3K4} leads to the following.

\begin{cor} 
There exists an absolute constant $C$ such that every $(K_3, K_4)$-co-critical graph on $n\ge 9$ vertices has at least $7n-C$ edges. 
\end{cor}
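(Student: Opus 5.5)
We plan to follow Day's $q$-neighbor bootstrap percolation closure method, as adapted in~\cite{SZ21}, with $q:=r-2$. Fix a $(K_{t_1},\ldots,K_{t_k})$-co-critical graph $G$ on $n\ge r$ vertices. By hypothesis $\delta(G)\ge r-2$. Run $(r-2)$-neighbor bootstrap percolation on $G$: start with a small set $A_0$ and repeatedly infect any vertex having at least $r-2$ infected neighbours. The point of the method is an edge count. Suppose we can find a set $A_0$ with $|A_0|\le c(r,k)$ whose closure is all of $V(G)$. Order the vertices outside $A_0$ by infection time. Each such vertex, when it is infected, contributes at least $r-2$ edges to previously infected vertices, and these edges are distinct for distinct vertices. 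Hence
\[
e(G)\ge (r-2)(n-|A_0|)\ge (r-2)n-(r-2)c(r,k),
\]
so $C:=(r-2)c(r,k)$ works. The whole proof thus reduces to showing that $G$ has a bounded-size percolating set for the $(r-2)$-process.

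\textbf{Step 1: a key lemma.} We will prove that if a closed set $B$ (one that infects no further vertex) satisfies $B\neq V(G)$, then $B$ is small, or we can add a bounded number of vertices to $B$ and make progress. Take a vertex $v\notin B$. Then $v$ has at most $r-3$ neighbours in $B$. Now use co-criticality. For any $u\in B$ non-adjacent to $v$, the graph $G+uv$ has every $k$-colouring containing a monochromatic $K_{t_i}$ in colour $i$. So in any good colouring $\tau$ of $G$, the edge $uv$ is forced: whichever colour $i$ it receives completes a $K_{t_i}$ in colour $i$ through $uv$. Since $uv$ can be given any of the $k$ colours, for each colour $i$ the common neighbourhood $N(u)\cap N(v)$ must contain a colour-$i$ copy of $K_{t_i-2}$ attached to both $u$ and $v$ in colour $i$.

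\textbf{Step 2: bounding $N(v)$.} Because $v$ has fewer than $r-2$ neighbours in $B$, these forced structures must mostly use vertices outside $B$. Recall that $K_{r-1}\nrightarrow (K_{t_1},\ldots,K_{t_k})$, and by Ramsey's theorem any set of $r$ vertices forming a clique in $G$ is forbidden. So every clique in $G$ has size at most $r-1$. Using this, we aim to show that the structure of $N(v)$ is controlled by a Ramsey-type bound in terms of $r$ and $k$. This should let us choose boundedly many extra vertices (say, at most $f(r,k)$ of them) such that adding them to $A_0$ enlarges the closure.

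\textbf{Step 3: closing the iteration.} A naive iteration of Step 2 could take $\Theta(n)$ rounds. To avoid this, we instead count the components of $\overline{B}$. Following~\cite{Day2017,SZ21}, we will argue that each uninfected vertex is adjacent to at most $r-3$ vertices of $B$. The edges between $B$ and $V\setminus B$ are then few, and the co-critical condition applied to non-edges crossing the cut forces all non-edges $uv$ with $u\in B$, $v\notin B$ to have many common neighbours. These common neighbours must lie in $V\setminus B$, which makes $G[V\setminus B]$ dense. When $V\setminus B$ is nonempty and large this should give a contradiction. Otherwise $|V\setminus B|$ is bounded, and we add $V\setminus B$ to $A_0$.

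\textbf{Main obstacle.} The main obstacle is Step 3: ruling out a large, nonempty, closed uninfected region without losing a factor depending on $n$. I would first try to compare $|E|$ with $(r-2)n$ directly, in the style of Day's ``discharging'' on the leftover set. The aim is to show that a leftover set $U=V\setminus B$ with every vertex having at most $r-3$ neighbours in $B$ still contributes at least $(r-2)|U|-O(1)$ edges. This would follow from $\delta(G)\ge r-2$ together with the extra degree inside $U$ forced by co-criticality, so that percolation failing costs only a constant.
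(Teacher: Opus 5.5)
There is a genuine gap, on two levels. First, the minimum-degree condition is not a hypothesis you may assume here, because the corollary is unconditional. The paper gets the corollary in one line. It takes $r=R(K_3,K_4)=9$ and feeds \cref{t:K3K4} into \cref{t:main}: every $(K_3,K_4)$-co-critical graph has $\delta\ge 7=r-2$, and there are no co-critical graphs on fewer than $r$ vertices, so the hypothesis of \cref{t:main} holds. Your proposal never specializes to $(K_3,K_4)$ and never supplies $\delta(G)\ge 7$. So even a completed version of your plan would only reprove the conditional \cref{t:main}.

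Second, your general plan is not carried out, and its target is stronger than what is needed. You aim for a bounded set $A_0$ whose $(r-2)$-bootstrap closure is all of $V(G)$, but nothing in Steps 1--3 establishes this; Steps 2 and 3 are stated only as aims. The ``contradiction'' in Step 3 is not one: density of $G[V\setminus B]$ contradicts nothing, and it only helps the edge count. Also, the claim that the forced common neighbours of $u\in B$ and $v\notin B$ lie outside $B$ is false, since they may be among the up to $r-3$ neighbours of $v$ in $B$.

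The paper never needs percolation of everything. It only needs a bounded $R$ such that every $v\notin\overline{R}$ has weight $\omega_{_R}(v)=d_{\overline{R}}(v)+\tfrac12 d_{Y(R)}(v)\ge q$. That already gives $e(G)\ge q(n-|R|)$, and a dense leftover region becomes harmless. To reach this in boundedly many steps you are missing two ideas:
\begin{enumerate}
\item[(i)] Group the bad vertices by colored $R$-type, which gives at most $(k+1)^{|R|}$ classes, and use Lemma~\ref{l:cnbr}. The lemma transfers information from one representative $y$ with a neighbour $x\in Y(R)$ to every $v$ of the same type: if $xv\notin E(G)$, then $x$ and $v$ have a common neighbour outside $R$. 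Adding the bounded set $X(R)\cup N(R)$ therefore makes progress at all bad vertices simultaneously.
\item[(ii)] Use the potential $\phi_{_R}\le\omega_{_R}$. It rises by at least $1/(2q)$ per round at every vertex that remains bad, so the process stops after at most $2q^2$ rounds, with $|R|$ bounded in terms of $q$ and $k$ only.
\end{enumerate}
Your Step 1 observation about forced colour-$i$ cliques $K_{t_i-2}$ in $N(u)\cap N(v)$ is correct. Applied one vertex at a time, though, it cannot avoid the $\Theta(n)$ iterations you yourself flag.
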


  \section{Proof of \cref{t:main}}\label{s:main}
For a graph $G$ and a vertex $v\in V(G)$, let $N_G(v)$  be the set of vertices in $G$ that are adjacent to $v$.
For  $A\subseteq V(G)$,  the subgraph of $G$ induced by $A$, denoted $G[A]$, is the graph with vertex set $A$ and edge set $\{xy \in E(G)\mid x, y \in A\}$.  For another set $B\subseteq V(G)$  that is disjoint from $A$, we denote by $B \less A$ the set $B - A$, $e_G(A, B)$ the number of edges between $A$ and $B$ in $G$, $e_G(A)$ the number of edges of $G[A]$, and $G \less A$ the subgraph of $G$ induced on $V(G) \less A$, respectively.\medskip

Let   $G$ be a  $(K_{t_1},\ldots,K_{t_k})$-co-critical graph and  let  $\tau : E(G) \rightarrow [k]$ be a critical $k$-coloring of $G$.  Then $G$ contains no monochromatic copy of $K_{t_i}$ in color $i$ for any $i\in[k]$ under $\tau$. Given an nonempty set $R\subseteq V(G)$, to apply the $q$-neighbor bootstrap percolation closure method to $G$, we need to partition certain subset of  $V(G)\setminus R$ according to both their adjacencies to the vertices of $R$ and the colors assigned by $\tau$. More precisely, for each nonempty set $R\subseteq V(G)$  and each vertex $v\in V(G)\setminus R$, define a function $\tau_{R, v}: R\longrightarrow\{0,1,\ldots,k\}$ given by,  for each $x\in R$,  
\[\tau_{R, v}(x)=\begin{cases}
0, & xv\notin E(G),\\
i, & \tau(xv)=i.
\end{cases}
\]
Two vertices $u,v\in V(G)\less R$ have the same \emph{colored $R$-type under $\tau$} if $\tau_{R, u}(x)=\tau_{R, v}(x)$ for every $x\in R$. There are at most
 $(k+1)^{|R|}$ possible colored $R$-types.  The next lemma is the essential in applying the q-neighbor bootstrap percolation method.   
 
  \begin{lem}\label{l:cnbr} 
  Let $R\subseteq V(G)$ and let $x,y\in V(G)\setminus R$ have the same
colored $R$-type under $\tau$.  Let $z\in V(G)\setminus R$.  If  $zx\in E(G)$ and  $zy\notin E(G)$, then $z$ and $y$ have a common
neighbor in  $V(G)\setminus R$.
 \end{lem}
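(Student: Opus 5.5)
We argue by contradiction, using that $G$ is co-critical. Suppose $z$ and $y$ have no common neighbor in $V(G)\setminus R$. Since $zy\notin E(G)$, co-criticality gives $G+zy\rightarrow(K_{t_1},\ldots,K_{t_k})$. To contradict this, we will extend $\tau$ to a $k$-coloring of $G+zy$ with no monochromatic $K_{t_i}$ in color $i$. The natural choice is to copy the color of the edge $zx$, which exists since $zx\in E(G)$: set $\tau(zy):=\tau(zx)=:i$ and leave all other colors unchanged.

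Any monochromatic $K_{t_j}$ in color $j$ in $G+zy$ under this coloring must use the new edge $zy$, since $\tau$ is critical on $G$. So $j=i$, and there is a clique $Q\ni z,y$ of size $t_i$ all of whose edges have color $i$. Every vertex $w\in Q\setminus\{z,y\}$ is a common neighbor of $z$ and $y$ in $G$. By our assumption no common neighbor lies outside $R$, so $Q\setminus\{z,y\}\subseteq R$.

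Now replace $y$ by $x$ and set $Q':=(Q\setminus\{y\})\cup\{x\}$. Note $x\notin Q$, since $x\notin R$, $x\neq z$ (as $zx\in E(G)$) and $x\neq y$ (as $zy\notin E(G)$). We check that $Q'$ is a clique in $G$ with all edges of color $i$:
\begin{itemize}
\item The edges inside $Q\setminus\{y\}$ are edges of $G$ of color $i$.
\item The edge $zx$ has color $i$ by the choice of $i$.
\item For each $w\in Q\setminus\{z,y\}\subseteq R$, we have $wy\in E(G)$ with $\tau(wy)=i$. Because $x$ and $y$ have the same colored $R$-type, $\tau_{R,x}(w)=\tau_{R,y}(w)=i$, so $wx\in E(G)$ and $\tau(wx)=i$.
\end{itemize}
Thus $Q'$ is a monochromatic $K_{t_i}$ in color $i$ in $G$ under $\tau$, contradicting that $\tau$ is critical. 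Hence $z$ and $y$ have a common neighbor in $V(G)\setminus R$.

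The step that needs care is confirming that every vertex of the hypothetical clique other than $z$ and $y$ lies in $R$. That is exactly where the no-common-neighbor assumption is used, and it is what lets the colored-$R$-type hypothesis transfer adjacency and color from $y$ to $x$. The case $t_i=2$ also fits this argument: then $Q=\{z,y\}$, $Q'=\{z,x\}$, and the edge $zx$ of color $i$ is already a forbidden $K_2$ in color $i$. So no separate treatment is needed.
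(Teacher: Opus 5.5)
Your proof is correct and follows essentially the same route as the paper. Both arguments color $zy$ with $\tau(zx)$, take the resulting monochromatic $K_{t_i}$ through $zy$, and use the shared colored $R$-type to swap $y$ for $x$, which yields a monochromatic $K_{t_i}$ in $G$ unless some vertex of that clique other than $z,y$ lies outside $R$. Your contradiction framing, with the explicit check that $x\notin Q$, states that step more precisely than the paper does, since the paper's ``$V(K)\setminus R\neq\emptyset$'' should really read $V(K)\setminus(R\cup\{y,z\})\neq\emptyset$.
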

\begin{proof} We may assume that $\tau(zx)=i$ for some $i\in[k]$. We may further assume that the color $i$ is blue. Since $G$ is  $(K_{t_1},\ldots,K_{t_k})$-co-critical, we see that $G+zy$ admits no critical $k$-coloring, in particular, $G+zy$ contains a blue copy of $K:=K_{t_i}$ under $\tau'$, where $\tau'$ is obtained from $\tau$ by coloring the edge $zy$ blue. Then $y,z\in V(K)$. Since $x$ and $y$ have the same  colored $R$-type under $\tau$, we see that $xv$ is colored blue under $\tau$ for every  vertex  $v\in V(K)\cap R$. Recall that $xz$ is colored blue under $\tau$. Since $G$ contains no blue copy of $K$ under $\tau$, it follows that $|V(K)\less  R|\le t_i-3$ and so $V(K)\less  R\ne\emptyset$. Now every vertex in $V(K)\less  R$ is adjacent to both  $z$ and $y$ in $G$, as desired. 
\end{proof} 
 
 We are now ready to prove \cref{t:main}, which we restate here for convenience.\threeclaw*
\begin{proof}
Let $G$ be a  $(K_{t_1},\ldots,K_{t_k})$-co-critical graph on $n\ge r$ vertices. Assume that   $\delta(G)\ge r-2$. We next show that there exists a constant $C=C(r,k)$ such that $e(G)\ge (r-2)n-C$. We may assume that $r\ge3$. \medskip

 Let $\tau : E(G) \rightarrow [k]$ be  a critical $k$-coloring of $G$. Then $G$ contains no monochromatic copy of $K_{t_i}$ in color $i$ for any $i\in[k]$ under $\tau$.   It is easy to see that  $G$ is connected. Let   $q\in\mathbb N$ with $q:=r-2$. Then $\delta(G) \ge q$. Following  the proof of Theorem 7(h) in~\cite{SZ21}, we next apply the   $q$-neighbor     bootstrap percolation   on $G$. Given a set  $S \subseteq V(G)$ and  any vertex  $v\in V(G)$, let  $N_S(v): =N_G(v) \cap S$ and $d_S(v): =|N_S(v)|$. 
Let  $R \subseteq V(G)$ be  any nonempty set.   Let $R^0: =R$ and for   $i \ge 1$, let 
\[
R^i: =R^{i-1} \cup \{v \in V(G)\mid  d_{R^{i-1}}(v) \ge q\}.
\] 
 Let $\overline{R}: = \bigcup_{i \ge 0} R^i$, the closure of $R$ under the $q$-neighbor bootstrap percolation on $G$.   Then 
 \[e(G[\overline{R}]) \ge q(|\overline{R}|-|R|), \] 
 because every vertex in $R^i\less R^{i-1}$ is 
 adjacent to at least $q$ vertices in $R^{i-1}$.  
 Let  $Y(R): =V(G) \less \overline{R}$. Then $d_{\overline{R}}(v)\le q-1$ for each $v\in Y(R)$. Finally, for any $v \in V(G)$,  let 
  \[
  \omega_{_R}(v): =d_{\overline{R}}(v) + \frac{d_{Y(R)}(v)}2.
  \]
  We call $\omega_{_R}(v)$ the  weight of $v$ (with respect to $R$). Then 
  \[e_G(\overline{R}, Y(R)) +e_G(Y(R))= \sum_{v \in Y(R)} \omega_{_R}(v).\]
 Assume that there exists a constant $C_1(q, k)$ and  a nonempty set $R\subseteq V(G)$ with $|R|\le C_1(q, k)$ such that $\omega_{_R}(v)\ge q$ for all $v \in Y(R)$. Then  
  \[e_G(\overline{R}, Y(R)) +e_G(Y(R))= \sum_{v \in Y(R)} \omega_{_R}(v)\ge q|Y(R)|.\]
Therefore, 
\begin{align*}
  e(G) & = e(G[\overline{R}])+e_G(\overline{R}, Y(R)) +e_G(Y(R))\\
  & \ge q(|\overline{R}|-|R|)+q|Y(R)|\\
& \ge q(|\overline{R}|+|Y(R)|)-q|R|)\\
& \le qn-qC_1(q,k)\\
&=qn-C
\end{align*}
where $C=qC_1(q,k)$, as desired.  
\medskip

Within $Y(R)$, we define $B(R):=\{v \in Y(R)\mid  \omega_{_R}(v) < q\}$, which we call the set of bad vertices (with respect to $R$). It suffices to show that   there exists a constant $C_1(q, k)$  and a nonempty set $R\subseteq V(H)$ with $|R|\le C_1(q, k)$ such that $B(R)=\es$.   \medskip 
   
  Assume $B(R)\ne \es$ for our initial $R$. Our goal is  to  move a small number of vertices into $R$ so that the remaining vertices in
$B(R)$ have strictly larger weight.  To achieve this,  we first observe  that for each $v\in B(R)$, $d_G(v)\ge q$ and $ \omega_{_R}(v)=d_{\overline{R}}(v) + \frac{d_{Y(R)}(v)}2<q$. It follows that \medskip

\noindent ($\ast$) \,\, $d_{Y(R)}(v)\ge1$ for every vertex $v\in B(R)$. \medskip

\noindent Recall that there are at most $(k+1)^{|R|}$ possible colored $R$-types under $\tau$.   We now partition $B(R)$ according to the colored $R$-types under $\tau$.  Let $p$ denote the number of non-empty colored $R$-type classes of $B(R)$. Then $p\le (k+1)^{|R|}$. 
Choose one vertex $y_j$ from each of the $p$ classes. By ($\ast$), let  $x_j\in Y(R)$ such that $x_jy_j\in E(G)$.   Let $X(R):=\{x_1, \ldots, x_p\}$. 
Finally, let 
  \[N(R):=\{v \in \overline{R}\mid  vx\in E(G)  \ \text{ for some } x\in X(R)\}.\]
Then $|N(R)|\le (q-1)|X(R)|$ because $d_{\overline{R}}(v)\le q-1$ for each $v\in Y(R)$. 
   We next show that  {\bf Algorithm}~\ref{algo} below yields   a nonempty set   $R\subseteq V(G)$ with $B(R)=\emptyset$.\bigskip

\begin{algorithm}[H]\label{algo}
\SetAlgoLined
\KwData{  $G$   with  $\delta(G) \ge q$  }
\KwResult{A nonempty set $R\subseteq V(G)$ with $B(R)=\emptyset$}
Set  $R $  to be a set containing an arbitrary  vertex in $G$;

\While{$B(R)\neq \emptyset$ }{
Set $R$ to be $R \cup X(R) \cup N(R)  $\;
}

\caption{Building a nonempty set $R\subseteq V(G)$ with $B(R)=\emptyset$}
\end{algorithm} 
\bigskip

\noindent Let $R_i$ be the set $R$ obtained  in the $i$-th iteration  of {\bf Line 2}  when  running {\bf Algorithm}~\ref{algo}. Then for all $i\ge1$, $R_{i-1}\subseteq R_i$,    $\overline{R}_{i-1} \subseteq \overline{R}_i$, $Y(R_i)\subseteq Y(R_{i-1})$ and $B(R_i) \subseteq B(R_{i-1})$. To see why $\omega_{_{R_i}}(v)\ge \omega_{_{R_{i-1}}}(v)$ for all $v\in  B(R_i)$, we next introduce a control function on $V(G)$, because   dealing with $\omega_{_R}(v)$ directly is difficult.   Let   $\phi_{_R}(v): =\sum_{x \in N_G(v)} f_{_R}(x)$ for all $v\in V(G)$, where for all $x\in V(G)$, 
\[f_{_R}(x)=
\begin{cases}
1, & \text{if}\ x \in R, \\
1/2, & \text{if}\ x \in \overline{R} \less R,\\
d_{R}(x)/(2q), & \text{if}\ x \in Y(R). 
\end{cases}
\]

\noindent It is worth noting that  $\phi_{_R}(v) \le \omega_{_R}(v)$ for every vertex  $v \in V(G)$, because    $d_{\overline{R}}(x) \le q-1$ for all  $x \in Y(R)$. Similarly,  for all $i\ge1$, $f_{_{R_{i-1}}}(x) \le f_{_{R_i}}(x)$ for every $x \in V(G)$,  because $Y(R_i)\subseteq Y(R_{i-1})$.  We next show that \bigskip

\noindent ($\ast\ast$)  for all $i \ge 1$ and  every $v \in B(R_i)$,  $\phi_{_{R_i}}(v) \ge \phi_{_{R_{i-1}}}(v)+1/(2q)$.\medskip

 Let $i\ge1$ and $v \in B(R_i)$. Then $v\in B(R_{i-1})$  since $B(R_i) \subseteq B(R_{i-1})$. Let  $X(R_{i-1})$ and $N(R_{i-1})$  be defined accordingly for $R_{i-1}$.   To prove $\phi_{_{R_i}}(v) \ge \phi_{_{R_{i-1}}}(v)+1/(2q)$, it suffices to show that $f_{_{R_i}}(x) \ge f_{_{R_{i-1}}}(x)+1/(2q)$ for some $x \in N_G(v)$. Note that $X(R_{i-1})\subseteq Y(R_{i-1})\cap R_i$. For each vertex  $x\in X(R_{i-1})$,  we see that $f_{_{R_{i-1}}}(x )=d_{_{R_{i-1}}}(x )/(2q) \le (q-1)/(2q)=1/2-1/(2q)$,  and $f_{_{R_i}}(x )=1>f_{_{R_{i-1}}}(x )+1/(2q)$.  We may assume that $vx \not\in E(G)$ for all $x\in X(R_{i-1})$, otherwise we are done.  Since $v\in B(R_i)$, by the choice of $X(R_{i-1})$ and  $N(R_{i-1})$,  there exists $y\in Y(R_{i-1})$ such that $v$ and $y$ have the same colored $R_{i-1}$-type under $\tau$, that is, $\tau_{R_{i-1}, v}(z)=\tau_{R_{i-1}, y}(z)$ for all $z\in B(R_{i-1})$.  By the choice of $X(R_{i-1})$, let $x\in X(R_{i-1})$ such that $xy\in E(G)$. Note that $vx\notin E(G)$ and $v\ne x$ because $x\in R_i$ and $v \in B(R_i)$. By Lemma~\ref{l:cnbr} applied to $R_{i-1}, v, y, x$,  there exists $w\in V(G)\less R_{i-1}$ such that $w$ is adjacent to both $v$ and $x$ in $G$.   Assume  that  $w \in \overline{R}_{i-1} \less R_{i-1}$. Then $f_{_{R_{i-1}}}(w)=1/2$ and $f_{_{R_i}}(w)=1$, and so $f_{_{R_i}}(w) \ge f_{_{R_{i-1}}}(w)+1/(2q)$, as desired.   We may assume that $w \in Y(R_{i-1})$. Then 
   \[f_{_{R_{i-1}}}(w)=d_{_{R_{i-1}}}(w )/(2q) \le (q-1)/(2q)=1/2-1/(2q).\] 
   Note that either $w \in \overline{R}_i$ or $w \in Y(R_i)$. 
Assume  $w \in \overline{R}_i$. Then $f_{_{R_i}}(w) \ge 1/2$ and so $f_{_{R_i}}(w) \ge f_{_{R_{i-1}}}(w)+1/(2q)$, as desired.   Finally, assume $w \in Y(R_i)$. Note that $wx\in E(G)$,  $x\in R_i\less R_{i-1}$ and $R_{i-1}\subseteq R_i$. It follows that $d_{R_i}(w)\ge d_{R_{i-1}}(w)+1$. Hence,  $f_{_{R_i}}(w)=d_{_{R_i}}(w)/(2q) \ge (d_{_{R_{i-1}}}(w)+1)/(2q)= f_{_{R_{i-1}}}(w)+1/(2q)$.    In all cases, we have shown that there exists  some  vertex $x\in N_G(v)$ such that     $f_{_{R_i}}(x)\ge f_{_{R_{i-1}}}(x)+1/(2q)$.  Therefore,  $\phi_{_{R_i}}(v) \ge \phi_{_{R_{i-1}}}(v)+1/(2q)$ for all $i\ge1$ and $v\in B(R_i)$. This proves  ($\ast\ast$).  \medskip
  
  By ($\ast\ast$),    {\bf Algorithm}~\ref{algo} stops  after   $m\le 2q^2$ iterations of {\bf Line 2} because $\phi_{_R}(v) \le \omega_{_R}(v)<q$ for each $v\in B(R)$. Hence  $R_m\subseteq V(G)$ with $  R_m\ne\es$ but  $B(R_m)=\emptyset$. For all $i \ge 0$, 
\begin{align*}
|R_{i+1}| &\le |R_i|+|X(R_i)| +|S(R_i)| \\
& \le |R_i|+|X(R_i)| +(q-1)|X(R_i)| \\
&\le |R_i|+ q(k+1)^{|R_i|}, 
\end{align*}
 which   only depends on $q$ and $k$. It follows that by {\bf Algorithm}~\ref{algo},  there exists  a constant $C_1(q,k)$  and a non-empty set $R\subseteq V(G)$ with   $|R| \le C_1(q,k)$  such that  $B(R)=\emptyset$, as desired.   \medskip
 
 This completes the proof of \cref{t:main}.
\end{proof}
 
\section{Concluding remarks}

 Galluccio,  Simonovits and Simonyi~\cite{Galluccio1992} also made an observation on the chromatic number of  $(K_{t_1}, \ldots,  K_{t_k})$-co-critical graphs.

\begin{lem}[Galluccio,  Simonovits and Simonyi~\cite{Galluccio1992}]\label{l:chi}
If  $G$ is a   $(K_{t_1}, \ldots,  K_{t_k})$-co-critical graph, then    $\chi(G)\ge r-1$,  and the equality holds  only when  $G$ is a complete $(r-1)$-partite graph, where $\chi(G)$ denotes the chromatic number of $G$ and $r=R(K_{t_1}, \ldots,  K_{t_k})$.  

\end{lem}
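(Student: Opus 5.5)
We prove the two claims of Lemma~\ref{l:chi} in turn, first $\chi(G)\ge r-1$ and then the characterization of equality.

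\textbf{Lower bound.} Suppose $\chi(G)\le r-2$ and fix a proper coloring with classes $V_1,\ldots,V_{r-2}$ (some possibly empty). Let $G'$ be the complete $(r-2)$-partite graph on the same classes, so $G\subseteq G'$. Since $K_{r-1}\nrightarrow(K_{t_1},\ldots,K_{t_k})$, there is a $k$-coloring $c$ of $E(K_{r-1})$ with no $K_{t_i}$ in color $i$. Identify the class $V_j$ with vertex $j$ of $K_{r-1}$, and color each edge of $G'$ between $V_j$ and $V_l$ by $c(jl)$. A monochromatic clique in $G'$ has its vertices in distinct classes, so it corresponds to a monochromatic clique of the same size and color in $K_{r-2}\subseteq K_{r-1}$. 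Hence $G'\nrightarrow(K_{t_1},\ldots,K_{t_k})$.

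Since $G$ is not complete, pick a nonedge $e$ of $G$. If $e$ lies between two different classes, then $G+e\subseteq G'$, so $G+e\nrightarrow(K_{t_1},\ldots,K_{t_k})$, contradicting co-criticality. If $e=uv$ lies inside a class $V_j$, move $v$ into a new class $V_{r-1}=\{v\}$. The resulting partition into $r-1$ classes contains $G+e$ in its complete $(r-1)$-partite graph. Coloring this graph by blowing up $c$ on all of $K_{r-1}$ again gives no forbidden clique, a contradiction. So $\chi(G)\ge r-1$.

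\textbf{Equality.} Let $\chi(G)=r-1$ with color classes $V_1,\ldots,V_{r-1}$, all nonempty. The blow-up of $c$ shows that the complete $(r-1)$-partite graph $G'$ on these classes satisfies $G'\nrightarrow(K_{t_1},\ldots,K_{t_k})$. If $G\ne G'$, then some edge $e\in E(G')\setminus E(G)$ exists, and $G+e\subseteq G'$ is not arrowing, contradicting co-criticality. Hence $G=G'$ is complete $(r-1)$-partite. It is non-complete unless every class is a singleton, which is excluded because $G$ is non-complete.

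There is no real obstacle here. The only point needing care is the case of a nonedge inside a class, which is handled by splitting off a new class; this is allowed because the Ramsey-avoiding coloring lives on $r-1$ vertices. The blow-up argument is the key step: a clique in a multipartite graph uses each part at most once.
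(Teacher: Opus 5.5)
Your proof is correct. The paper gives no proof of this lemma; it only cites Galluccio, Simonovits and Simonyi. Your argument is the standard one: pull back a Ramsey-avoiding coloring of $K_{r-1}$ along a proper coloring of $G+e$ with at most $r-1$ classes, so that every nonedge $e$ of a co-critical $G$ must satisfy $\chi(G+e)\ge r$, which yields both $\chi(G)\ge r-1$ and the complete $(r-1)$-partite structure when equality holds.
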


\cref{l:chi} is essential in the proof of \cref{t:K3K4}, and it  implies that if $G$ is a   $(K_{t_1}, \ldots,  K_{t_k})$-co-critical graph with $\delta(G)\le r-3$, then $\chi(G)\ge r$. \medskip
 
 Casas-Rocha, Snyder and the present author~\cite{CSS24} also established the following lower bound on the minimum degree of $(K_{t_1},\ldots,K_{t_k})$-co-critical graphs. However, this bound remains far from the bound $r-2$ given in \cref{c:mindeg}.

\begin{thm}[Casas-Rocha, Snyder and Song~\cite{CSS24}]
 For all integers $k\ge2$  and  $t_k\ge \cdots\ge t_1\ge 3$, every $(K_{t_1}, \ldots,K_{t_k})$-co-critical graph on $n\ge r$ vertices has minimum degree at least $t_k-2k-1+\sum_{i=1}^k t_i$. 
\end{thm}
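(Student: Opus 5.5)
The plan is to work locally around a vertex of minimum degree and extract disjoint monochromatic cliques from its neighbourhood. Let $v$ be a vertex with $d_G(v)=\delta(G)$. If $v$ is adjacent to every other vertex, then $d_G(v)=n-1\ge r-1$. This is at least the claimed bound, since the bound is at most $r-2$ by the trivial Ramsey estimate. So we may assume $v$ has a non-neighbour $u$. Fix a critical coloring $\tau$, and for each $i\in[k]$ let $N_i(v)$ be the set of neighbours $w$ of $v$ with $\tau(vw)=i$. These sets partition $N_G(v)$, so $d_G(v)=\sum_i |N_i(v)|$, and it suffices to bound each $|N_i(v)|$ from below.

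\emph{Step 1 (the basic $\sum t_i-2k$ term).} For each $i$, extend $\tau$ to $G+uv$ by coloring $uv$ with $i$. Since $G$ is co-critical, this creates a $K_{t_i}$ in color $i$ through $uv$. Its other $t_i-2$ vertices form a color-$i$ clique $K^i\subseteq N_i(v)\cap N_G(u)$. This is the same mechanism as in Lemma~\ref{l:cnbr}. Hence $|N_i(v)|\ge t_i-2$ for every $i$, and summing gives $d_G(v)\ge \sum_{i}t_i-2k$.

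\emph{Step 2 (the extra $t_k-1$ term).} We need to show $|N_k(v)|\ge 2t_k-3$; note that $t_k$ is the largest clique size. To get freedom to recolor, I would choose, among all critical colorings of $G$, one maximizing $|N_k(v)|$. Since $t_1\ge3$, the clique $K^1$ is nonempty; take $x\in K^1$. Recoloring $vx$ from $1$ to $k$ enlarges $N_k(v)$, so the new coloring is not critical. It cannot contain a color-$1$ clique that was absent before, so it must create a color-$k$ $K_{t_k}$ containing $vx$. This yields a color-$k$ clique $Q\subseteq N_k(v)$ of order $t_k-2$, all of whose vertices are joined to $x$ in color $k$. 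Since $G$ has no color-$k$ $K_{t_k}$ under $\tau$, the color-$k$ graph on $N_k(v)$ is $K_{t_k-1}$-free. So neither $K^k\cup\{x\}$ nor $Q\cup\{\text{any vertex of }K^k\}$ can be a color-$k$ clique. I would exploit this, together with the fact that $K^k\subseteq N(u)$, to show that $K^k$ and $Q$ overlap in at most one vertex. Then $|N_k(v)|\ge 2(t_k-2)-1+\ldots$, and the remaining slack should come from running the same argument with $u$ replaced by a vertex of $K^1$, or by iterating over several $x\in K^1$.

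\emph{Main obstacle.} The obstacle is precisely Step 2: guaranteeing that the color-$k$ cliques produced by different perturbations (adding $uv$, and recoloring $vx$) are essentially disjoint inside $N_k(v)$. If the direct overlap argument fails, I would try two fallbacks. The first is to recolor an entire clique $K^1$ at once, rather than a single edge, to force a larger color-$k$ structure in $N_k(v)$. The second is to use the extremal choice of $\tau$ more strongly, via lexicographic maximization of $(|N_k(v)|,|N_{k-1}(v)|,\dots)$, so that the collisions between cliques can be recolored away. Either way, the required count is $t_k-1$ additional vertices beyond the $\sum t_i-2k$ obtained in Step 1.
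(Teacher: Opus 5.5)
Your reduction to the case where $v$ has a non-neighbor, and your Step 1, are fine. For the reduction you only need the bound to be at most $r-1$, which follows from $r-1\ge\prod_i(t_i-1)$. The paper only quotes this theorem from Casas-Rocha, Snyder and Song and gives no proof, so the question is whether your argument stands on its own.

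It does not. The target of Step 2, namely $|N_k(v)|\ge 2t_k-3$ for a suitable critical coloring, is false, so no refinement of your overlap argument can work. Take $k=2$ and $t_1=t_2=3$, so $r=6$ and the claimed bound is $4$. Consider the co-critical graph $G=K_4+\overline{K}_{n-4}$ with $n\ge 6$. Its minimum-degree vertices are those of the independent side. In every critical coloring, the $K_5$ spanned by the $K_4$ and such a vertex $w$ must be colored as two complementary monochromatic $5$-cycles. So $w$ has exactly two neighbors of each color, and $|N_2(v)|=2<3=2t_2-3$ for \emph{every} critical coloring, including one that maximizes $|N_2(v)|$. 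The surplus $t_k-1=2$ over Step 1 is split $1+1$ between the two classes, not concentrated in class $k$.

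The overlap reasoning is also invalid. Since $vx$ and $ux$ both have color $1$, nothing forbids $K^k\cup\{x\}$ from being a color-$k$ clique. In fact $Q\cup\{x\}$ is one by construction, so nothing you say excludes $Q=K^k$. In the example, write $N(v)=\{a,b,c,d\}$ with red cycle $vabcdv$ and blue cycle $vbdacv$. Every other independent vertex $u$ is red to $a,d$ and blue to $b,c$, so you may take $K^1=\{a\}$ and $K^2=\{c\}$. Recoloring $va$ blue creates the blue triangle $vac$, so $Q=\{c\}=K^2$, and $K^2\cup\{a\}$ is a blue edge. Even granting overlap at most one, you would only reach $2t_k-5$.

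What is missing is an argument that bounds $|N(v)|$ jointly over all color classes. For instance, assume $d(v)\le\sum_i(t_i-2)+t_k-2$ and produce, for some non-neighbor $u$, a coloring of $G+uv$ with no color-$i$ $K_{t_i}$. Such an argument must use information beyond the colored graph on $\{u,v\}\cup N(v)$. Already for $(K_3,K_3)$, consider this configuration:
\begin{itemize}
\item $N(v)=\{a,b,c\}$;
\item $v$ and $u$ are both red to $a$ and blue to $b,c$;
\item $ab$ and $ac$ are blue, and $bc$ is red.
\end{itemize}
It contains no monochromatic triangle, and adding $uv$ in either color creates one. No recoloring of the edges at $v$ avoids this, yet $d(v)=3$, which is below the bound $4$.
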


\end{document}